\documentclass[12pt,a4paper]{amsart}
\usepackage{latexsym}
\usepackage{amsmath,amssymb,amsfonts,amsthm,amscd}
\usepackage[all,arc]{xy}
\usepackage{graphicx}
\usepackage{enumerate}
\usepackage{mathrsfs}
\usepackage{comment}
\usepackage{arydshln}
\usepackage{mathtools}
\usepackage{colonequals}
\usepackage{braket}

\newtheorem{thm}{Theorem}[section]

\newtheorem{lem}[thm]{Lemma}

\theoremstyle{definition}

\theoremstyle{remark}

\newcommand{\bA}{\mathbb{A}}
\newcommand{\bC}{\mathbb{C}}

\newcommand{\bQ}{\mathbb{Q}}
\newcommand{\bR}{\mathbb{R}}

\newcommand{\bZ}{\mathbb{Z}}

\newcommand{\cO}{\mathcal{O}}

\newcommand{\cZ}{\mathcal{Z}}

\newcommand{\sA}{\mathscr{A}}

\newcommand{\ol}{\overline}

\newcommand{\wt}{\widetilde}
\newcommand{\wh}{\widehat}

\newcommand{\Gm}{\mathbb{G}_{\mathrm{m}}}
\newcommand{\NT}{\mathrm{NT}}

\DeclareMathOperator{\Hom}{Hom}

\DeclareMathOperator{\End}{End}
\DeclareMathOperator{\Gal}{Gal}

\DeclareMathOperator{\Res}{Res}
\DeclareMathOperator{\Nm}{Nm}

\DeclareMathOperator{\Lie}{Lie}
\DeclareMathOperator{\Ch}{Ch}
\DeclareMathOperator{\Sh}{Sh}
\DeclareMathOperator{\GL}{GL}

\DeclareMathOperator{\GU}{GU}
\DeclareMathOperator{\oU}{U}
\DeclareMathOperator{\BC}{BC}
\DeclareMathOperator{\Ad}{Ad}
\DeclareMathOperator{\diag}{diag}

\DeclareMathOperator{\der}{d}
\DeclareMathOperator{\vol}{vol}

\title{Arithmetic Gan--Gross--Prasad conjecture for RSZ unitary Shimura curves}
\author{Yuta Nakayama}
\date{}
\begin{document}
\maketitle
\footnotetext{2020 \textit{Mathematics Subject Classification}.
 Primary: 11F67; Secondary: 11G18}

\begin{abstract}
Xue proved an equational refinement of the unitary Shimura curve case of the arithmetic Gan--Gross--Prasad conjecture via the Gross--Zagier formula for quaternionic Shimura curves.
On the other hand, Rapoport, Smithling and Zhang posed a variant of the conjecture, using modified PEL type Shimura varieties, which we call RSZ Shimura varieties.
We reinterpret the result of Xue in terms of the modified Shimura curves.
We then use the reinterpretation to prove a case of the variant of the conjecture.
Our result combined with the work of Xue establishes a connection between the variant and the Gross--Zagier formula.
\end{abstract}

\setcounter{tocdepth}{1}
\tableofcontents

\section{Introduction} \label{sec:Intro}

We cherish Shimura varieties in number theory since they link moduli of abelian varieties and automorphic representation theory.
Along this line is, apart from the Langlands correspondence, a growing subject relating the intersection theory of special cycles on Shimura varieties and automorphic representation theory, following the Gross--Zagier formula \cite{GZ}.

The arithmetic Gan--Gross--Prasad conjecture \cite[\S 27]{GGPConj} is a typical example of the subject.
This AGGP conjecture relates a property of Chow groups of Shimura varieties as automorphic representations, the vanishing of certain Beilinson--Bloch height pairings on such varieties, and that of automorphic \(L\)-functions.
The conjecture for unitary groups was upgraded to a conjectural equation by Xue \cite[Conjecture 5.1]{XueAGGP}.
The paper proves the numerical conjecture for curves and for a certain case involving surfaces.
The curve case rests on Gross--Zagier formula for quaternionic Shimura curves \cite[Theorem 1.2]{YZZ}.

Unfortunately, the unitary Shimura varieties in these conjectures are not PEL but abelian type.
However, Rapoport, Smithling and Zhang \cite{RSZInt} invented PEL type variants of the varieties and corresponding conjectures.

In this paper, we aim to make explicit the relation expected in \cite[Remark 6.18]{RSZInt} between the generalized Gross--Zagier formula \cite[Theorem 1.2]{YZZ} and the modified AGGP conjecture \cite[Conjecture 6.12]{RSZInt} for curves.
As the work of Xue for curves establishes the relation betweeen the former and his numerical AGGP conjecture, here we achieve our goal by deriving a part of \cite[Conjecture 6.12]{RSZInt} from the numerical AGGP conjecture by Xue.
Our research is meaningful in that it is the first to explicitly discuss the RSZ variant of AGGP conjecture as far as the author knows.

We describe our results in more detail.
Set \(F\) to be a CM number field.
Its maximal totally real subfield is denoted by \(F_0\).
Choose a CM type \(\Phi\) of \(F/F_0\) and \(\varphi_0\in\Phi\).
Let \(W\) be a \(2\)-dimensional nondegenerate \(F/F_0\)-hermitian space such that the signature of \(W\otimes_{F,\varphi} \bC\) is \((0,2)\) if \(\varphi_0\neq\varphi\in\Phi\), and is \((1,1)\) if \(\varphi =\varphi_0\).
Choose a totally negative element \(u\) in \(W\).
Set \(W^\flat\) to be its orthogonal complement.

Set \(G = \oU(W)\) as an algebraic group over \(F_0\).
Also consider the torus \(Z^\bQ\) over \(\bQ\) as follows:
\[
    Z^\bQ(R) = \{z\in\Res_{F/\bQ}\Gm(R)\mid \Nm_{F/F_0}(z)\in R^{\times}\},
\]
where \(R\) is a \(\bQ\)-algebra.
Put \(\wt G = Z^\bQ\times \Res_{F_0/\bQ} G\).
We similarly define \(H\) and \(\wt H\) with respect to \(W^\flat\).
Also, put \(\wt{HG} \colonequals \wt H\times_{Z^\bQ} \wt G\).
We mainly consider in this paper compactifications \(\Sh (\wt {HG})\) of Shimura curves over the reflex field \(E\supseteq \varphi_0(F)\) with respect to the last group.
These Shimura curves are endowed with all the advantages mentioned in \cite[Remark 2.6]{RSZExp}.

We first focus on our numerical result.
Let \(\sA(\wt{HG})\) be the set of irreducible admissible representations of \(\wt{HG}(\bA_{\bQ,f})\) appearing in
\[
    H^1(\Sh (\wt {HG})(\bC),\bC))\colonequals \varinjlim_K H^1(\Sh_K (\wt {HG})(\bC),\bC).
\]
Let \(\pi\) be an irreducible tempered automorphic representation of \(\wt{HG}(\bA_\bQ)\) with trivial restriction to \(Z^\bQ(\bA_\bQ)\) such that \(\pi_f \in \sA(\wt{HG})\).
This \(\pi\) can be seen as an irreducible automorphic representation \(\pi_1\boxtimes\pi_2\) of \(H(\bA_{F_0})\times G(\bA_{F_0})\).
Under the decomposition \(\pi_f = \otimes_{v\nmid \infty}'\pi_v\) and that of its contragradient representation, take \(\phi = \otimes\phi_v\in\pi_f\) and \(\phi' = \otimes\phi'_v\in\wt\pi_f\) in the dual.
Choose a locally constant function \(\wt t_{\phi,\phi'}\colon\wt{HG}(\bA_{\bQ,f})\to\bC\) with compact support such that it induces \(\phi\otimes \phi'\) on \(\pi_f\) and zero maps on other representations in \(\sA(\wt{HG})\).
This \(\wt t_{\phi,\phi'}\) has an associated Hecke action \(T\) on the Jacobian \(J\) of \(\Sh_{\wt{K'}}(\wt{HG})\).

The notation \(z_{\wt{K'},0}\) means the cycle of \(\Sh_{\wt{K'}}(\wt{HG})\) defined by the diagonal embedding \(\Sh_{\wt{K'}\cap \wt H(\bA_\bQ,f)}(\wt{H})\to \Sh_{\wt{K'}}(\wt{HG})\), suitably normalized and cohomologically trivialized.
We think of it as an element of \(J(E)\otimes_\bZ \bC\).

\begin{thm}[Theorem \ref{eq}] \label{Intro-eq}
    We have for sufficiently small levels and for \(\beta\) and \(\alpha_v^\natural\) as in the last two paragraphs of \S \ref{subsec:Auto}
    \begin{align*}
        &\vol \Sh_{\wt{K'}}(\wt{HG})(\bC)\vol' \wt{K'}\langle T(z_{\wt{K'},0}), z_{\wt{K'},0}\rangle_\NT \\
        =& [E:\varphi_0(F)]\frac{\vol \wt K\sharp\Sh_{\wt K}(\wt H)(\bC)}{2^{\beta -1}}\frac{L_f'(1/2,\BC(\pi_1)\times \BC(\pi_2))}{L_f(1,\pi_1, \Ad)L_f(1,\pi_2, \Ad)}\prod_v \alpha_v^\natural(\phi_v, \phi'_v).
    \end{align*}
\end{thm}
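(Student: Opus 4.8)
The plan is to reduce the statement to the curve case of Xue's equational AGGP conjecture \cite[Conjecture 5.1]{XueAGGP}, which he proves using the Gross--Zagier formula for quaternionic Shimura curves \cite[Theorem 1.2]{YZZ}, by systematically comparing the RSZ Shimura curve with the classical unitary Shimura curve. First I would make precise the relationship between $\Sh(\wt{HG})$ and the Shimura variety attached to $\Res_{F_0/\bQ}(H\times G)$: the group $\wt{HG} = \wt H \times_{Z^\bQ} \wt G$ is an extension assembled from $Z^\bQ$ and $\Res_{F_0/\bQ}(H\times G)$, and as in \cite[Remark 2.6]{RSZExp} each $\Sh_{\wt{K'}}(\wt{HG})$ is, up to a finite abelian twist recorded by the zero-dimensional Shimura variety $\Sh(Z^\bQ)$, a disjoint union of copies of the classical curve. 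Choosing levels $\wt{K'}$ and $K'$ compatible under this description, I would track the resulting bookkeeping factors: the degree $[E:\varphi_0(F)]$ coming from the reflex fields, the volumes $\vol\Sh_{\wt{K'}}(\wt{HG})(\bC)$, $\vol'\wt{K'}$, $\vol\wt K$, and the point count $\sharp\Sh_{\wt K}(\wt H)(\bC)$, which are exactly the normalization constants appearing in the desired identity.

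Next I would compare the input data of the two height pairings. The diagonal cycle $z_{\wt{K'},0}$, being the pushforward of the fundamental class of $\Sh(\wt H)$, decomposes componentwise under the comparison above into a normalized multiple of the CM cycle of \cite{XueAGGP}; matching the ``suitably normalized and cohomologically trivialized'' conventions on the two sides is what produces the factors $2^{\beta-1}$ and $\sharp\Sh_{\wt K}(\wt H)(\bC)$. Similarly, since $\pi$ has trivial restriction to $Z^\bQ(\bA_\bQ)$ it descends to $\pi_1 \boxtimes \pi_2$ on $(H\times G)(\bA_{F_0})$, so the Hecke correspondence $T$ attached to $\wt t_{\phi,\phi'}$ corresponds to the operator Xue uses to isolate $\pi_1 \boxtimes \pi_2$. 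Using functoriality of the N\'eron--Tate (equivalently, Beilinson--Bloch) height pairing under these correspondences and under passage between the two Jacobians, $\langle T(z_{\wt{K'},0}), z_{\wt{K'},0}\rangle_\NT$ becomes, up to the explicit constants just collected, exactly the height pairing that Xue evaluates.

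Finally I would substitute Xue's formula, which expresses that pairing as $L_f'(1/2, \BC(\pi_1) \times \BC(\pi_2)) / (L_f(1,\pi_1,\Ad) L_f(1,\pi_2,\Ad))$ times a product of local period factors, and identify the latter with $\prod_v \alpha_v^\natural(\phi_v, \phi'_v)$. The base change $\BC$, the adjoint $L$-functions, and the temperedness assumption are literally the same objects on both sides, so the $L$-function part transfers verbatim; the main obstacle is the local comparison. The local factors of \cite{RSZInt} and those arising in \cite{XueAGGP} are normalized by different recipes, so one must check place by place --- including the archimedean places and the finitely many ramified ones --- that their ratio is absorbed into $\alpha_v^\natural$ and into the constant $2^{\beta-1}$, and that the Haar measures implicit in $\vol'\wt{K'}$ and in Xue's integrals are the compatible ones. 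One also has to confirm that the low-rank exceptional isomorphisms needed to bring \cite[Theorem 1.2]{YZZ} to bear on the $\oU(2)\times\oU(1)$ situation contribute no constants beyond those already listed. Once every normalization is pinned down, the displayed identity follows, and this simultaneously makes explicit the link anticipated in \cite[Remark 6.18]{RSZInt}.
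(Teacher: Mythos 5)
Your outline follows essentially the same route as the paper: exploit the decomposition \(\wt{HG}\simeq Z^\bQ\times\Res_{F_0/\bQ}(H\times G)\) to split \(\Sh_{\wt{K'}}(\wt{HG})\), the diagonal cycle, and the Hecke operator into a \(Z^\bQ\)-factor and the classical curve, track the volume, point-count and \([E:\varphi_0(F)]\) normalizations (the latter from passing the N\'eron--Tate height from \(E\) to \(\varphi_0(F)\)), and then substitute Xue's equational AGGP formula. The paper makes one step explicit that you gloss over — the left-hand side is independent of \(\wt{K'}\) and of the choice of \(\wt t_{\phi,\phi'}\) (as in Xue's Proposition A.4), which is what justifies restricting to product levels \(K_{Z^\bQ}\times K'\) and \(\wt t_{\phi,\phi'}=t\otimes t_{\phi,\phi'}\) — and note that the factor \(2^{\beta-1}\) is already part of Xue's formula rather than an artifact of matching normalizations.
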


Theorem \ref{Intro-eq} yields the modified AGGP conjecture in the following.
Set \(\cZ_{\wt{K'}, 0}\) to be the Hecke submodule of \(\Ch^1(\Sh_{\wt{K'}}(\wt{HG}))\otimes_\bZ \bC\) generated by \(z_{\wt{K'},0}\).
The equivalence below is a part of \cite[Conjecture 6.12]{RSZInt} for the curve case.
\begin{thm}[Theorem \ref{qual}]\label{Intro-AGGP}
    Assume that \(\pi_{2,\infty}\) is cohomological.
    For a sufficiently small level \(\wt{K'}\subseteq \wt{HG}(\bA_{\bQ,f})\), the following are equivalent.
    \begin{itemize}
        \item The pairing \(\langle*,z_{\wt{K'},0}\rangle_{\NT}\) is nonzero on \(\cZ_{\wt{K'}, 0}[\pi_f^{\wt{K'}}]\).
        \item The order of vanishing of \(L(s,\BC(\pi_1)\times \BC(\pi_2))\) at \(s = 1/2\) is \(1\) and \(\Hom_{\wt H(\bA_{\bQ,f})}(\pi_f, \bC)\) has dimension \(1\) with a generator nonvanishing on \(\pi_f^{\wt K'}\).
    \end{itemize}
\end{thm}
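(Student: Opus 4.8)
The plan is to read everything off the exact identity proved in Theorem~\ref{eq}. Isolating $\langle T(z_{\wt{K'},0}),z_{\wt{K'},0}\rangle_\NT$ there and rearranging, one obtains
\[
    \langle T(z_{\wt{K'},0}),z_{\wt{K'},0}\rangle_\NT \;=\; C\cdot\frac{L_f'(1/2,\BC(\pi_1)\times\BC(\pi_2))}{L_f(1,\pi_1,\Ad)\,L_f(1,\pi_2,\Ad)}\prod_v\alpha_v^\natural(\phi_v,\phi'_v),
\]
where $C = [E:\varphi_0(F)]\,\vol\wt K\cdot\sharp\Sh_{\wt K}(\wt H)(\bC)\big/\big(2^{\beta-1}\,\vol\Sh_{\wt{K'}}(\wt{HG})(\bC)\,\vol'\wt{K'}\big)$ is a strictly positive real number. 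Because $\pi_1$ and $\pi_2$ are tempered, the adjoint $L$-values $L_f(1,\pi_i,\Ad)$ are finite and nonzero; as the only $(\phi,\phi')$-dependence on the right is through $\prod_v\alpha_v^\natural(\phi_v,\phi'_v)$, the quantity $\langle T(z_{\wt{K'},0}),z_{\wt{K'},0}\rangle_\NT$ is nonzero for some choice of $(\phi,\phi')$ if and only if $L_f'(1/2,\BC(\pi_1)\times\BC(\pi_2))\neq 0$ and $\prod_v\alpha_v^\natural(\phi_v,\phi'_v)\neq 0$ for some choice. It therefore suffices to match the left-hand side with the first condition, and these two factors with, respectively, the order condition and the $\Hom$-space condition in the second.

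For the analytic factor: in the present setting the global root number $\varepsilon(1/2,\BC(\pi_1)\times\BC(\pi_2))$ equals $-1$ --- this is precisely the incoherent case isolated by the signature of $W$ --- so the complete $L$-function vanishes at $s=1/2$; since $\pi$ is tempered and $\pi_{2,\infty}$ is cohomological, the archimedean factor is holomorphic and nonzero there, whence $L_f(1/2,\BC(\pi_1)\times\BC(\pi_2))=0$ and $\operatorname{ord}_{s=1/2}L(s,\BC(\pi_1)\times\BC(\pi_2))=\operatorname{ord}_{s=1/2}L_f(s,\BC(\pi_1)\times\BC(\pi_2))$. Thus $L_f'(1/2,\BC(\pi_1)\times\BC(\pi_2))\neq 0$ is equivalent to the order of vanishing of $L(s,\BC(\pi_1)\times\BC(\pi_2))$ at $s=1/2$ being exactly $1$. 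For the local factors: all but finitely many $\alpha_v^\natural$ are identically $1$ by the unramified normalization, and the local Ichino--Ikeda/Gan--Gross--Prasad identity relating $\alpha_v^\natural$ to the local period gives $\alpha_v^\natural\not\equiv 0\iff \Hom_{\wt H(\bQ_v)}(\pi_v,\bC)\neq 0$ --- since $H=\oU(W^\flat)$ with $W^\flat$ one-dimensional is a torus, this is the toric (Tunnell--Saito) case. By the local Gan--Gross--Prasad multiplicity-one bound together with the product formula for $\Hom$-spaces, $\prod_v\alpha_v^\natural(\phi_v,\phi'_v)\neq 0$ for some $(\phi,\phi')$ is equivalent to $\dim\Hom_{\wt H(\bA_{\bQ,f})}(\pi_f,\bC)=1$, and for $\wt{K'}$ small enough a generator of this line is automatically nonzero on $\pi_f^{\wt{K'}}$. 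Hence the right-hand side of the displayed identity is nonzero for some $(\phi,\phi')$ exactly when the second condition holds.

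For the geometric factor I would identify ``$\langle T(z_{\wt{K'},0}),z_{\wt{K'},0}\rangle_\NT\neq 0$ for some $(\phi,\phi')$'' with the first condition. Write $z^\pi$ for the $\pi_f$-isotypic component of $z_{\wt{K'},0}$ in $J(E)\otimes_\bZ\bC$ and $V_\pi=\pi_f^{\wt{K'}}\otimes_\bC M$ for that isotypic part, with $M$ a finite-dimensional multiplicity space: the operator $T$ acts on $V_\pi$ by $v\otimes m\mapsto\langle v,\phi'\rangle\,\phi\otimes m$ (canonical pairing $\pi_f^{\wt{K'}}\times\wt\pi_f^{\wt{K'}}\to\bC$) and by zero on all other isotypic parts, while the Néron--Tate pairing is non-degenerate --- being positive definite on $J(E)\otimes\bR$ --- and pairs $V_\pi$ with the $\wt\pi_f$-isotypic part, annihilating every other cross-term. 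By Jacobson density the level-$\wt{K'}$ Hecke algebra surjects onto $\End_\bC(\pi_f^{\wt{K'}})$, so $\cZ_{\wt{K'},0}[\pi_f^{\wt{K'}}]=\pi_f^{\wt{K'}}\otimes M_0$, where $M_0\subseteq M$ is the smallest subspace with $z^\pi\in\pi_f^{\wt{K'}}\otimes M_0$; moreover $\{T(z^\pi):\phi,\phi'\}$ is exactly the set of tensors of rank $\le 1$ in $\pi_f^{\wt{K'}}\otimes M_0$. Consequently ``$\langle *,z_{\wt{K'},0}\rangle_\NT$ is nonzero on $\cZ_{\wt{K'},0}[\pi_f^{\wt{K'}}]$'' says that the linear functional $\langle\,\cdot\,,z_{\wt{K'},0}\rangle_\NT$ does not vanish on $\pi_f^{\wt{K'}}\otimes M_0$, which --- a nonzero linear functional on a tensor product never vanishing identically on pure tensors --- is equivalent to its not vanishing on some rank-$\le 1$ tensor there, i.e.\ to $\langle T(z_{\wt{K'},0}),z_{\wt{K'},0}\rangle_\NT\neq 0$ for some $(\phi,\phi')$. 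Stitching the three steps together yields the claimed equivalence.

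I expect the geometric step to be the main obstacle: establishing that $V_\pi$ has the form $\pi_f^{\wt{K'}}\otimes M$ and that Néron--Tate pairs it non-degenerately with the $\wt\pi_f$-isotypic part while killing all other cross-terms --- this is where the cohomologicality of $\pi_{2,\infty}$, semisimplicity of the Hecke action on $J(E)\otimes\bC$, and Poincaré duality enter --- together with the local nonvanishing equivalence $\alpha_v^\natural\not\equiv 0\iff\Hom_{\wt H(\bQ_v)}(\pi_v,\bC)\neq 0$, although here only the toric case is needed. Verifying $\varepsilon(1/2,\BC(\pi_1)\times\BC(\pi_2))=-1$, controlling the archimedean $L$-factor near the center, and choosing $\wt{K'}$ small enough to render all the reductions simultaneously valid should be routine within the framework of \S\ref{subsec:Auto}.
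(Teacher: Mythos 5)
Your overall skeleton --- reduce everything to Theorem~\ref{eq}, match the geometric side with nonvanishing of \(\langle T(\wt t_{\phi,\phi'})_{\wt{K'},*}(z_{\wt{K'},0}),z_{\wt{K'},0}\rangle_\NT\) for some \((\phi,\phi')\), and match the right-hand side with the two clauses of condition (2) --- is the same as the paper's, and the geometric step (Hecke semisimplicity, generation of \(\cZ_{\wt{K'},0}[\pi_f^{\wt{K'}}]\) by the rank-one operators \(T(\wt t_{\phi,\phi'})\)) is essentially what the paper does via Lemma~\ref{project}. But two of your steps have genuine gaps. First, the assertion that \(\epsilon(1/2,\BC(\pi_1)\times\BC(\pi_2))=-1\) ``because this is the incoherent case isolated by the signature of \(W\)'' is not correct as stated: the global sign is governed, through the local Gan--Gross--Prasad dichotomy, by \emph{which pure inner form carries the distinguished member of the Vogan packet at every place}, hence by the local distinction of \(\pi_v\) at all finite places, not by the archimedean signature of \(W\) alone. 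If \(\Hom_{H(F_{0,v})}(\pi_v,\bC)\) vanished at some finite place the sign could be \(+1\) and \(L(1/2)\neq 0\), so your claimed unconditional equivalence ``\(L_f'(1/2)\neq 0\iff \operatorname{ord}_{s=1/2}L=1\)'' is false without the Hom hypothesis. The paper gets \(\epsilon=-1\) only \emph{after} securing local distinction at all finite places (from (2), or from (1) via the period nonvanishing), combining it with the archimedean packet computation of Lemma~\ref{archrep}(\ref{packet}) (the distinguished archimedean member is not on \(G(F_{0,v})\)), and invoking \cite[\S 26(1)]{GGPConj}; your chain does have \(\prod_v\alpha_v^\natural\neq 0\) available at that point, so the argument can be repaired, but it must be run --- it is not a consequence of the signature of \(W\). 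Relatedly, the holomorphy and nonvanishing of the archimedean \(L\)-factor at \(s=1/2\) is not just ``temperedness''; the paper computes it explicitly (Lemma~\ref{archrep}(\ref{crucialL}),(\ref{trivL})).

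Second, in passing from ``\(\dim\Hom_{\wt H(\bA_{\bQ,f})}(\pi_f,\bC)=1\)'' to the last clause of (2), your claim that a generator is ``automatically nonzero on \(\pi_f^{\wt{K'}}\) for \(\wt{K'}\) small enough'' is unjustified at the fixed level at which condition (1) is asserted (and if it were automatic, that clause of (2) would be vacuous). The paper's route is different and this is the missing idea: it shows the generator \emph{is} the functional \(\langle\,\cdot\,,z_{\wt{K_1'},0}\rangle_\NT\), by checking compatibility of these pairings along the tower of levels (projection formula \(\vol(\wt{K_2})q_*y_{\wt{K_2'},0}=z_{\wt{K_1'},0}\)), forming \(\cZ_\pi=\varinjlim\cZ_{\wt{K_1'},0}[\pi_f^{\wt{K_1'}}]\simeq\pi_f\), and using the \(\wt H(\bA_{\bQ,f})\)-invariance of the diagonal cycle class; the nonvanishing of this generator on \(\pi_f^{\wt{K'}}\) is then literally condition (1). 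Without some such identification (or an honest argument at the fixed level), your deduction of (2) from (1) is incomplete.
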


Finally, here is the organization of the paper.
In \S \ref{sec:Not}, we fix notations for the rest of the paper.
In \S \ref{sec:Num}, we deduce Theorem \ref{Intro-eq} from the curve case of \cite[Conjecture 5.1]{XueAGGP}.
In \S \ref{sec:AGGP}, we relate \S \ref{sec:Num} to \cite[Conjecture 6.12]{RSZInt}.

\subsection*{Acknowledgements}

It is a pleasure of the author to thank his advisor N.~Imai for the academic support.
The author also thanks H.~Xue for answering questions of the author in detail.

\section{Notations} \label{sec:Not}

We mostly use variants of notations in \cite{XueAGGP}.
For a finite set \(X\), its number of elements is denoted by \(\sharp X\).
For sets \(X\subseteq Y\), the characteristic function on \(Y\) with respect to \(X\) is denoted by \(\mathbf{1}_X\).
The unit of a group will be denoted by \(1\).
The Weil group of archimedean local fields are denoted by \(W_\bR\) and \(W_\bC\).
Let \(j\in W_\bR\backslash W_\bC\) be such that \(j^2 = -1\) and that \(jzj^{-1} = \ol z\) for \(z\in W_\bC\).
The notations \(\bA_L\) and \(\bA_{L,f}\) stand for the adele and the finite adele of a number field \(L\).
For a smooth representation \(\pi\) of a locally compact Hausdorff totally disconnected group \(G_0\), the contragradient representation is meant by \(\wt\pi\).
For the same \(G_0\), we set \(C_c^\infty(G_0)\) to be the set of \(\bC\)-valued locally constant functions with compact support.
If \(K\subseteq G_0\) is an open compact subgroup, then \(C_c^\infty(K\backslash G_0/K)\) will be the subset made of the \(K\)-bi-invariant functions.
Its analogue made of \(\bQ\)-valued functions will be denoted by \(C_c^\infty(K\backslash G_0/K, \bQ)\).

\subsection{Shimura curves} \label{subsec:Shi}
We comply with the notations in \cite[\S 3]{RSZInt}.
We first recall algebraic groups concerned.
Let \(F\) be a CM field with maximal totally real subfield \(F_0\).
Fix a CM type \(\Phi\) of \(F/F_0\) and its element \(\varphi_0\).

Take a nondegenerate \(F/F_0\)-hermitian space \(W\) of dimension \(2\).
Let the signature of \(W\otimes_{F,\varphi} \bC\) be \((0,2)\) or \((1,1)\) depending on whether \(\varphi_0\neq \varphi\in \Phi\) or \(\varphi = \varphi_0\).
Put \(G \colonequals \oU(W)\), a unitary group over \(F_0\).
We also define a \(\bQ\)-torus \(Z^\bQ\) and algebraic groups \(G^\bQ\) and \(\wt G\) over \(\bQ\) as
\begin{align*}
    Z^\bQ(R) &\colonequals \{z\in\Res_{F/\bQ}\Gm(R)\mid \Nm_{F/F_0}(z)\in R^{\times}\}, \\
    G^\bQ(R) &\colonequals \{g\in\Res_{F_0/\bQ}\GU (W)(R)\mid c(g)\in R^{\times}\},\\
	\wt G &\colonequals Z^\bQ\times_{\Gm}G^\bQ,
\end{align*}
\(R\) being a \(\bQ\)-algebra and \(c\) meaning the similitude.
We have \(\wt G\simeq Z^\bQ\times\Res_{F_0/\bQ} G\) by carrying \((z,g)\) to \((z,z^{-1}g)\).

We choose a totally negative vector \(u\in W\).
We define \(W^\flat\) as its orthogonal complement.
Groups \(H\), \(H^\bQ\) and \(\wt H\) are the same as \(G\), \(G^\bQ\) and \(\wt G\) except that we replace \(W\) with \(W^\flat\) in their definitions.
We also put
\[
\wt{HG} \colonequals \wt H\times_{Z^\bQ} \wt G\simeq Z^\bQ\times \Res_{F_0/\bQ}H\times \Res_{F_0/\bQ}G.
\]

Next, we recall the Shimura data in use below.
The map \(h_{Z^\bQ}\) in our Shimura datum \((Z^\bQ,\{h_{Z^\bQ}\})\) sends \(z\in\bC^\times\) to \((\ol z)\in Z^\bQ(\bR)\subseteq (\bC^\times)^\Phi\), where elements of \(\Phi\) induce the isomorphism \(F\otimes_\bQ \bR\simeq\bC^\Phi\) and then the inclusion.

To define a Shimura datum for \(G^\bQ\), we choose isomorphisms \(W\otimes_{F,\varphi} \bC\simeq \bC^2\) for each \(\varphi\in \Phi\) sending \(u\) to a multiple of \((0, 1)\) such that the hermitian forms on the left side becomes \(J_\varphi\colonequals \diag(2\delta_{\varphi,\varphi_0}-1, -1)\), where \(\delta\) is the Kronecker delta.
The left and right hand sides of the isomorphisms will be identified.
Take \(h_{G^\bQ}\colon\Res_{\bC/\bR}\Gm\to G^\bQ\) for our Shimura datum for \(G^\bQ\) so that the map which \(h_{G^\bQ}\) induces on \(\bR\)-valued points comes from various \(\bR\)-algebra maps \(\bC\to \End_\bC(W\otimes_{F,\varphi}\bC)\) for \(\varphi\in \Phi\) sending \(\sqrt{-1}\) to \(\sqrt{-1}J_\varphi\).

We similarly define \(h_{H^\bQ}\) using the above isomorphisms.
Take fiber products of these to define \(h_{\wt G}\), \(h_{\wt H}\) and \(h_{\wt{HG}}\), acquiring Shimura data with the same reflex field \(E\) associated with corresponding groups.
The field \(E\) is the composite of \(\varphi_0(F)\) and the reflex field of \(\Phi\).

Take projections of these three to define \(h_G\), \(h_H\) and \(h_{HG}\) that gives Shimura data with common reflex field \(\varphi_0(F)\) regarding \(\Res_{F_0/\bQ} G\), \(\Res_{F_0/\bQ} H\) and \(\Res_{F_0/\bQ} (H\times G)\).

We only consider sufficiently small levels for Shimura varieties.
Also, Shimura varieties coming from the above data are points or curves.
We mean smooth compactifications of such varieties by the symbols usually denoting the Shimura varieties themselves.
Moreover, in those symbols, we will not mention the conjugacy classes of the maps from Deligne torus.

\subsection{Volumes} \label{subsec:Vol}

Set \(\eta\colon \bA_{F_0}^\times/F_0^\times\to \{\pm 1\} \) to be the character in relation to \(F/F_0\) via global class field theory.
Fix a nontrivial additive character \(\psi\colon \bA_F/F\to \bC^\times\) such that \(\psi_v = \exp(2\pi\sqrt{-1}*)\) for each infinite place \(v\) of \(F\).
We put
\[
    \Delta_m\colonequals\prod_{i=1}^{m} L(i,\eta_v^i)
\]
for a positive integer \(m\).

For a place \(v\) of \(F_0\) and an \(F_v/F_{0,v}\)-hermitian space \(V\), the Haar measure \(\vol_v\) on \(\oU(V)\) will be the normalized local measure in \cite[\S 1.5]{XueAGGP}.
Namely, the pullback of the invariant differential form associated with \(\vol_v\) along the Cayley transform \(\Lie\oU(V)\to\oU(V)\) that carries \(X\) to \((1+X)(1-X)^{-1}\) is, at the origin, \(\Delta_{\dim V}\) times the one associated with the self-dual Haar measeure on \(\Lie\oU(V)\) with respect to \(\psi_v\).

For an \(F/F_0\)-hermitian space \(V\), we set
\[
    \vol\colonequals \Delta_{\dim V}^{-1} \prod_{v\nmid \infty}\vol_v,
\]
a Haar measure on \(\oU(V)(\bA_{F_0, f})\).
The symbol \(\vol\) will also denote the measure on \(Z^\bQ(\bA_{\bQ,f})\) such that \[
    \vol(Z^\bQ(\wh{\bZ})) = \sharp (\Sh_{Z^\bQ(\wh{\bZ})}(Z^\bQ)(\bC))^{-1},
\]
the product measure of these on \(\wt H(\bA_{\bQ,f}) = Z^\bQ(\bA_{\bQ,f})\times\oU(W^\flat)(\bA_{F_0,f})\), and the measure on Shimura varieties as in the next paragraph.
We fix three Haar measures all denoted by \(\vol'\) on \(Z^\bQ(\bA_{\bQ,f})\), \(H(\bA_{F_0,f})\times G(\bA_{F_0,f})\) and \(\wt{HG}(\bA_{\bQ,f})\) so that the third one is the product of the others.

In this paragraph, we decide a measure \(\vol\) on \(\Sh_{\wt{K'}}(\wt{HG})(\bC)\), compatible for different sufficiently small levels \(\wt{K'}\).
We only need to do it for \(\wt{K'} = \wt K\times K_G\) with \(\wt K\subseteq \wt{H}(\bA_{\bQ,f})\) and \(K_G\subseteq G(\bA_{\bQ,f})\).
Then our \(\vol\) is the product measure of the counting measure on \(\Sh_{\wt K}(\wt{H})(\bC)\) and the measure on \(\Sh_{K_G}(\Res_{F_0/\bQ} G)(\bC)\) in \cite[\S 4.1]{XueAGGP}\footnote{Multiply \(-1\) to the hermitian form of \(W\) to reach the case of \cite{XueAGGP}.} defined using the Hodge bundle on the curve.
See \cite[\S 3C]{LiuATL} for details of the last line bundle in the noncompact case.

\subsection{Cohomologically trivial cycles} \label{subsec:Cohtriv}

Generalizing \cite[\S 4.2]{XueAGGP}, we first define a cycle \(T(f)_K\) in \(\Sh_K(G_0)^2\) with complex coefficient for \(G_0\in\{\Res_{F_0/\bQ}(H\times G), Z^\bQ, \wt{HG}\}\), a level \(K\subseteq G_0(\bA_{\bQ,f})\) and a function \(f\) in \(C_c^\infty(K\backslash G_0(\bA_{\bQ,f})/K)\) as follows.
For \(x\in G_0(\bA_{\bQ,f})\), we have the image \(T(\mathbf{1}_{KxK})_K\) of \(\Sh_{K\cap xKx^{-1}}(G_0)\to\Sh_K(G_0)^2\), whose first projection is the transition morphism and whose second one is the right multiplication by \(x\) followed by the transition morphism.
Extend this construction linearly to the whole \(C_c^\infty(K\backslash G_0(\bA_{\bQ,f})/K)\).

Next, we recall the cohomologically trivial part of the Chow group.
For a smooth proper curve \(X\) over a number field \(L\subseteq \bC\), set \(J(X)\) to be its Jacobian variety.
The N\'eron--Tate height over \(L\) is meant by \(\langle *,*\rangle_\NT\).
We also set \(\Ch^1(X)_0\), the cohomologically trivial part, to be the kernel of the Betti cycle map \(\Ch^1(X)\to H^2(X(\bC), \bZ)\).
The last map can be written via the homomorphism \(H^1(X(\bC), \cO_{X(\bC)}^\times)\to H^2(X(\bC), \bZ)\) from the exponential sheaf sequence.
We have an injection \(\Ch^1(X)_0\to J(X)(L)\).

We specialize to the case \(X = \Sh_{\wt{K'}}(\wt{HG})\).
The Hecke algebra \(C_c^\infty(\wt{K'}\backslash \wt{HG}(\bA_{\bQ,f})/\wt{K'}, \bQ)\) acts on \(\Ch^1(X)\otimes_\bZ \bQ\).
Since the Hodge bundles at the end of \S \ref{subsec:Vol} are compatible with each other, their pull-back on each connecting component of \(X\) defines a Hecke equivariant retraction \(p\colon\Ch^1(X)\otimes_\bZ \bQ\to \Ch^1(X)_0\otimes_\bZ \bQ\).

\begin{lem}\label{project}
    The homomorphism \(R(f^-)\) in \cite[(6.13)]{RSZInt} coincides with \(p\) irrespective of the choice of \(f^-\).
\end{lem}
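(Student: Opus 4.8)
The plan is to unwind both sides of the claimed identity to the same canonical object, namely the retraction attached to the Hodge bundle, so that the role of $f^-$ disappears. First I would recall from \cite[(6.13)]{RSZInt} that $R(f^-)$ is defined by choosing an auxiliary function $f^-$ that represents the trivial Hecke operator on $H^2(X(\bC),\bQ)$ (equivalently, on the Betti cohomology), and composing the cycle-class action of $f^-$ with a correction so that the image lands in $\Ch^1(X)_0\otimes_\bZ \bQ$; any two such $f^-$ differ by a function acting as zero on cohomology. On the other side, the retraction $p$ defined at the end of \S\ref{subsec:Vol} is obtained by subtracting, from a cycle, the multiple of the Hodge class on each connected component that matches its degree there; this is manifestly a projector onto $\Ch^1(X)_0\otimes_\bZ\bQ$ that restricts to the identity on $\Ch^1(X)_0\otimes_\bZ\bQ$ and is Hecke-equivariant because the Hodge bundles for different levels are compatible. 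So the core of the proof is: both $R(f^-)$ and $p$ are Hecke-equivariant projectors onto $\Ch^1(X)_0\otimes_\bZ\bQ$.

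Next I would show that such a projector is unique, which forces $R(f^-)=p$ and in particular kills the dependence on $f^-$. The key point is the Hecke-module decomposition
\[
    \Ch^1(X)\otimes_\bZ \bQ \;=\; \bigl(\Ch^1(X)_0\otimes_\bZ\bQ\bigr)\;\oplus\;\bigl(\text{span of the Hodge classes of the connected components}\bigr),
\]
where the second summand is the image of $\Ch^1(X)\otimes_\bZ\bQ$ in $H^2(X(\bC),\bQ)$, realized inside $\Ch^1(X)\otimes_\bZ\bQ$ by the Hodge bundle, and is itself a Hecke-stable complement. Any Hecke-equivariant retraction onto the first summand must vanish on a Hecke-stable complement mapping isomorphically onto the quotient; but the degree-on-components map identifies that quotient, and the Hodge-class span is exactly the canonical lift of it, on which $R(f^-)$ is forced to vanish by its defining property (it lands in the cohomologically trivial part) and on which $p$ vanishes by construction. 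Hence the two projectors agree on both summands, so they agree. Here I would be careful to check that $R(f^-)$ is genuinely a retraction, i.e. that it is the identity on $\Ch^1(X)_0\otimes_\bZ\bQ$; this is part of how $f^-$ is chosen in \cite[(6.13)]{RSZInt}, and I would cite that directly rather than reprove it.

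The main obstacle, I expect, is matching the precise normalization of $R(f^-)$ from \cite[(6.13)]{RSZInt} — in particular confirming that the ``correction term'' there is exactly projection away from the Hodge class on each component, rather than away from some other Hecke-stable complement (for instance one coming from a different polarization or a different choice of line bundle). To handle this I would argue on each connected component separately: on a single connected smooth proper curve the cohomologically trivial subgroup of $\Ch^1$ has a unique complement that is a line (the image in $H^2$), so there is no ambiguity once one knows $R(f^-)$ and $p$ both produce degree-zero cycles on every component and both fix $\Ch^1(X)_0$. The compatibility of the Hodge bundles across levels, already recorded in \S\ref{subsec:Vol}, ensures this componentwise description is compatible with the Hecke action, completing the identification. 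I would then remark that, as a consequence, $R(f^-)$ coincides with $p$ for \emph{every} admissible $f^-$, which is the assertion of Lemma~\ref{project}.
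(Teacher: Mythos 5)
Your overall strategy --- view both maps as Hecke-equivariant retractions onto \(\Ch^1(X)_0\otimes_\bZ\bQ\) and then argue such a retraction is unique --- is plausible in outline, but the uniqueness step as you argue it has a genuine gap, and it is exactly where the paper does its real work. You claim \(R(f^-)\) ``is forced to vanish'' on the span of the Hodge classes because its image lies in the cohomologically trivial part; that is a non sequitur: landing in \(\Ch^1(X)_0\otimes_\bZ\bQ\) only says the image of a Hodge class is a cohomologically trivial cycle, not that it is zero. Likewise the assertion that on a connected curve \(\Ch^1(X)_0\otimes_\bZ\bQ\) ``has a unique complement'' is false: any class of nonzero degree spans a complement, and two Hecke-equivariant retractions onto \(\Ch^1(X)_0\otimes_\bZ\bQ\) can differ by a nonzero Hecke-module homomorphism from the component-class quotient into \(\Ch^1(X)_0\otimes_\bZ\bQ\), which cannot be excluded without knowing something about the Hecke-module structure of the target. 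Similarly, your remark that two admissible choices of \(f^-\) ``differ by a function acting as zero on cohomology'' does not give independence: acting by zero on cohomology does not imply acting by zero on the Chow group. Indeed, the independence of \(f^-\) in \cite[Remark 6.9(i)]{RSZInt} is stated conditionally on semisimplicity of the relevant Chow module, and the fact that \(R(f^-)\) restricts to the identity on \(\Ch^1(X)_0\otimes_\bZ\bQ\) is of the same conditional nature, so neither can simply be cited as a formal consequence of the definition.

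The paper supplies precisely the missing input: via the exponential sheaf sequence it exhibits \(\Ch^1(X)_0\otimes_\bZ\bQ\) as a quotient of a Hecke submodule of \(H^1(X(\bC),\cO_{X(\bC)}) = H^1(X(\bC),\bZ)\otimes_\bZ\bR\), which is semisimple as a Hecke module by Matsushima's formula; hence \(\Ch^1(X)_0\otimes_\bZ\bQ\) is semisimple, \cite[Remark 6.9(i)]{RSZInt} applies to give independence of \(f^-\), and the identification with \(p\) follows because both are retractions onto the same direct summand, whose constituents occur in \(H^1\) and are therefore distinct from the component characters occurring in \(H^2\). To repair your argument you would need this semisimplicity (or at least the disjointness of the Hecke constituents of \(\Ch^1(X)_0\otimes_\bZ\bQ\) from those of the component-class quotient); without such an input the uniqueness of a Hecke-equivariant projector does not follow formally.
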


\begin{proof}
The exponential sheaf sequence of \(X(\bC)\) gives us an exact sequence
\[
    H^1(X(\bC), \bZ)\to N\to \Ch^1(X)_0\to 0
\]
for a Hecke submodule \(N\) of \(H^1(X(\bC), \cO_{X(\bC)})\) finitely generated as an abelian group.
In fact, we can embed not only \(N\) but \(N\otimes_\bZ \bQ\) into \(H^1(X(\bC), \cO_{X(\bC)})\).
The latter equals \(H^1(X(\bC), \bZ)\otimes_\bZ \bR\), a \(C_c^\infty(\wt{K'}\backslash \wt{HG}(\bA_{\bQ,f})/\wt{K'}, \bQ)\)-module that is semisimple by Matsushima's formula.
Therefore, \(\Ch^1(X)_0\otimes_\bZ \bQ\) is also a semisimple Hecke module.
In this situation, \(R(f^-)\) is unique by \cite[Remark 6.9(i)]{RSZInt}.
It equals \(p\) as the retraction to the same direct summand.
\end{proof}

We consider the diagonal embedding \(\wt H\to \wt{HG}\).
Put \(\wt K \colonequals \wt{K'}\cap \wt H(\bA_{\bQ,f})\).
The embedding \(\Sh_{\wt K}(\wt H)\to \Sh_{\wt{K'}}(\wt{HG})\) defines a cycle on the latter.
Let \(y_{\wt{K'}, 0}\in\Ch^1(\Sh_{\wt{K'}}(\wt{HG}))_0\otimes_\bZ \bQ\) be the image of this cycle by \(R(f^-)\).
Put
\[
z_{\wt{K'}, 0}\colonequals\vol (\wt K) y_{\wt{K'}, 0}\in\Ch^1(\Sh_{\wt{K'}}(\wt{HG}))_0\otimes_\bZ \bC.
\]
It will be identified with an element in \(J(\Sh_{\wt{K'}}(\wt{HG}))(E)\otimes_\bZ \bC\).
We have an analogue \(y_{K', 0}\) of \(y_{\wt{K'}, 0}\), denoted by \(y_{K, 0}\) in \cite[\S 5.1]{XueAGGP}, for the diagonal cycle \(\Sh_K(H)\subseteq \Sh_{K'}(\Res_{F_0/\bQ}(H\times G))\), where \(K = K'\cap H(\bA_{F_0,f})\).
We are using the notations closer to \cite[(6.14)]{RSZInt}.

\subsection{Automorphic representations} \label{subsec:Auto}

For a \(\bQ\)-algebraic group \(G_0\) in a Shimura datum in \S \ref{subsec:Shi}, put
\[
    H^m(\Sh (G_0)(\bC),\bC)\colonequals \varinjlim_K H^m(\Sh_K (G_0)(\bC),\bC),
\]
where \(m\) is the dimension of the Shimura varieties and \(K\) runs through compact open subgroups of \(G_0(\bA_{\bQ,f})\).
The symbol \(\sA(G_0)\) will denote the set of irreducible admissible representations of \(G_0(\bA_{\bQ,f})\) appearing in
\(H^m(\Sh (G_0)(\bC),\bC)\).

We take an irreducible tempered automorphic representation \(\pi\) of \(\wt{HG}(\bA_\bQ)\) whose finite part \(\pi_f\) is in \(\sA(\wt{HG})\) and whose restriction to \(Z^\bQ(\bA_\bQ)\) is trivial.
We see \(\pi\) also as an \(H(\bA_{F_0})\times G(\bA_{F_0})\)-representation.
We have decompositions \(\pi = \otimes_v'\pi_v\) and \(\wt\pi = \otimes_v'\wt{\pi_v}\), where the restricted tensor products are defined with respect to spherical elements whose pairings are \(1\) except at finitely many places.
We take \(\phi = \otimes \phi_v\in\pi_f = \otimes_{v\nmid \infty}'\pi_v\) and \(\phi' = \otimes \phi'_v\in\wt\pi_f = \otimes_{v\nmid \infty}'\wt{\pi_v}\).

We have a surjection
\[
    C_c^\infty(G_0(\bA_{\bQ,f}))\to\bigoplus_{\sigma\in\sA(G_0)}\End_\bC(\sigma)
\]
by the Hecke action using \(\vol'\).
If \(G_0 =\Res_{F_0/\bQ} (H\times G)\), then let \(t_{\phi,\phi'}\in C_c^\infty(G_0(\bA_{\bQ,f}))\) be an element sent by this morphism to \(\phi\otimes\phi'\) for \(\sigma = \pi_f\) and \(0\) otherwise.
If \(G_0 =Z^\bQ\), then let \(t\in C_c^\infty(G_0(\bA_{\bQ,f}))\) be an element that this morphism sends to the identity when \(\sigma\) is trivial and \(0\) otherwise.
If \(G_0 =\wt{HG}\), then let \(\wt t_{\phi,\phi'}\in C_c^\infty(G_0(\bA_{\bQ,f}))\) be an element which this morphism sends to \(\phi\otimes\phi'\) for \(\sigma = \pi_f\) and \(0\) otherwise.
For example, we could take \(\wt t_{\phi,\phi'} = t\otimes t_{\phi,\phi'}\).

Decompose \(\pi = \pi_1\boxtimes \pi_2\) as the outer tensor products of an \(H(\bA_{F_0})\)-representation and a \(G(\bA_{F_0})\)-representation.
We mean by \(\BC(\pi_i)\) the base change of each \(\pi_i\), a representation of \(\GL_i(\bA_F)\).
Let \(\beta\) be the sum of the number of irreducible cuspidal automorphic representations appearing in the expression of \(\BC(\pi_i)\) as isobaric sums for \(1 \leq i \leq 2\).
The function \(L(s,\BC(\pi_1)\times \BC(\pi_2))\) will mean the Rankin--Selberg convolution.
In general, let \(L_f\) be the partial \(L\)-function away from archimedean places.

Finally, we recall periods in \cite[\S 5.1]{XueAGGP} except a difference in the measures.
If \(v\) is a finite place of \(F_0\), then let
\[
    \alpha_v^\natural(\phi_v,\phi'_v)\colonequals \Delta_{2,v}^{-1}\frac{L(1,\pi_{1,v},\Ad)L(1,\pi_{2,v},\Ad)}{L(1/2,\BC(\pi_{1,v})\times\BC(\pi_{2,v}))}\int_{H(F_{0,v})} \phi'_v(\pi_v(h)(\phi_v))\der\vol_v(h),
\]
where \(h\) is diagonally embedded in \(H(F_{0,v})\times G(F_{0,v})\) and notations regarding \(L\)-functions mean local analogues of global notions.
If \(v\) is archimedean, then \(\phi_v\) and \(\phi'_v\) are not defined.
We formally put \(\alpha_v^\natural(\phi_v,\phi'_v)\colonequals \Delta_{2,v}^{-1}\vol_v(H(F_{0,v}))\), considering the period of the trivial representation.

\section{The numerical refinement of the AGGP conjecture} \label{sec:Num}

We prove Theorem \ref{Intro-eq} here from \cite[Conjecture 5.1]{XueAGGP} in the case of curves.

\begin{thm}\label{eq}
We have
\begin{align*}
    &\vol\Sh_{\wt{K'}}(\wt{HG})(\bC)\vol' \wt{K'}\langle T(\wt t_{\phi ,\phi'})_{\wt{K'},*}(z_{\wt{K'},0}), z_{\wt{K'},0}\rangle_\NT \\
    =&[E:\varphi_0(F)]\frac{\vol \wt K\sharp\Sh_{\wt K}(\wt H)(\bC)}{2^{\beta -1}}\frac{L_f'(1/2,\BC(\pi_1)\times \BC(\pi_2))}{L_f(1,\pi_1, \Ad)L_f(1,\pi_2, \Ad)}\prod_v \alpha_v^\natural(\phi_v, \phi'_v).
\end{align*}
\end{thm}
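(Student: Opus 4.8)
The plan is to deduce the formula from the curve case of \cite[Conjecture 5.1]{XueAGGP} by rewriting every term attached to \(\wt{HG}\) in terms of the corresponding term attached to \(\Res_{F_0/\bQ}(H\times G)\). First I would reduce to a level of the shape \(\wt{K'}=\wt K\times K_G\) with \(\wt K\subseteq\wt H(\bA_{\bQ,f})\) and \(K_G\subseteq G(\bA_{\bQ,f})\); this is harmless because both sides of the claimed identity are compatible with pushforward along transition maps and with shrinking the level, as is the formation of \(z_{\wt{K'},0}\). I would then take \(\wt t_{\phi,\phi'}=t\otimes t_{\phi,\phi'}\), which is permitted in \S\ref{subsec:Auto}. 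Using the isomorphism \(\wt{HG}\simeq Z^\bQ\times\Res_{F_0/\bQ}H\times\Res_{F_0/\bQ}G\) together with the fact that \(\Sh(\wt H)\) and \(\Sh(Z^\bQ)\) are zero-dimensional, the curve \(\Sh_{\wt{K'}}(\wt{HG})\) is, as an \(E\)-scheme, a disjoint union of copies of \(\Sh_{K'}(\Res_{F_0/\bQ}(H\times G))_E\), the components being controlled by a class set of cardinality \(\sharp\Sh_{\wt K}(\wt H)(\bC)\); compare the description in \cite[Remark 2.6]{RSZExp}. The decomposition \(\pi_f=\pi_{1,f}\boxtimes\pi_{2,f}\), the K\"unneth formula, and the triviality of \(\pi\) on \(Z^\bQ(\bA_\bQ)\) match the condition \(\pi_f\in\sA(\wt{HG})\) with \(\pi_{1,f}\boxtimes\pi_{2,f}\in\sA(\Res_{F_0/\bQ}(H\times G))\).

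Next I would transport the arithmetic data through this decomposition. On each component the diagonal cycle underlying \(y_{\wt{K'},0}\) is the diagonal cycle underlying Xue's \(y_{K,0}\), and by Lemma \ref{project} the cohomological trivialization \(R(f^-)\) agrees with the Hodge-bundle retraction \(p\); hence \(y_{\wt{K'},0}\) restricts componentwise to \(y_{K,0}\), and the scaling \(z_{\wt{K'},0}=\vol(\wt K)\,y_{\wt{K'},0}\) matches Xue's normalized diagonal cycle after comparing \(\vol(\wt K)\) with \(\vol(K)\). Since \(\pi\) is trivial on \(Z^\bQ(\bA_\bQ)\), the operator \(T(\wt t_{\phi,\phi'})=T(t)\otimes T(t_{\phi,\phi'})\) acts on the relevant Hecke eigenspace of each component as \(T(t_{\phi,\phi'})\), because \(T(t)\) is the identity there. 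Finally, the N\'eron--Tate pairing over \(E\) is additive over the disjoint union of components, and on cycles already defined over \(\varphi_0(F)\) it satisfies \(\langle\ast,\ast\rangle_{\NT,E}=[E:\varphi_0(F)]\,\langle\ast,\ast\rangle_{\NT,\varphi_0(F)}\); combining this with the count of components yields the factor \([E:\varphi_0(F)]\,\sharp\Sh_{\wt K}(\wt H)(\bC)\) together with the appropriate power of \(\vol(\wt K)\) in front of \(\langle T(t_{\phi,\phi'})_\ast y_{K,0},y_{K,0}\rangle_{\NT}\).

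It then remains to reconcile the normalizations and substitute \cite[Conjecture 5.1]{XueAGGP}. I would unwind the measure \(\vol\) on \(\Sh_{\wt{K'}}(\wt{HG})(\bC)\) as the product of the counting measure on \(\Sh_{\wt K}(\wt H)(\bC)\) with Xue's Hodge-bundle measure on \(\Sh_{K_G}(\Res_{F_0/\bQ}G)(\bC)\), split \(\vol'\wt{K'}\) as the product of the measure on the \(Z^\bQ\)-factor and \(\vol'K'\), insert the normalization \(\vol(Z^\bQ(\wh\bZ))=\sharp\Sh_{Z^\bQ(\wh\bZ)}(Z^\bQ)(\bC)^{-1}\), and compare with the conventions of \cite[\S5.1]{XueAGGP}, keeping track of the \(\Delta_{2,v}^{-1}\) factors and of the formally defined archimedean terms in \(\prod_v\alpha_v^\natural\). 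After substituting Xue's identity for \(\Sh_{K'}(\Res_{F_0/\bQ}(H\times G))\), the surviving constants should collapse to exactly \([E:\varphi_0(F)]\,\vol(\wt K)\,\sharp\Sh_{\wt K}(\wt H)(\bC)/2^{\beta-1}\) times the displayed ratio of \(L\)-values and the product of periods.

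I expect the main obstacle to be precisely this last bookkeeping. The delicate points are: the exact match between the RSZ measure on the Shimura curve and Xue's Hodge-bundle measure; the interaction of the \(Z^\bQ\)-measure normalization with the counting of connected components; the way the scaling factor \(\vol(\wt K)\) built into \(z_{\wt{K'},0}\) feeds quadratically into the height pairing and must be balanced against the volume prefactors and Xue's cycle normalization; and the appearance of the formal archimedean periods \(\Delta_{2,v}^{-1}\vol_v(H(F_{0,v}))\), which should emerge from the "difference in measures" recorded in \S\ref{subsec:Auto} and from the \(\Delta_m\)-factors in the definitions of \S\ref{subsec:Vol}. Establishing that all of these combine so that nothing survives beyond the stated constant is the crux of the argument; the identification of cycles and Hecke operators, once Lemma \ref{project} is in hand, is comparatively formal.
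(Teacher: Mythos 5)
Your overall route is the same as the paper's: specialize to a product level and to \(\wt t_{\phi,\phi'}=t\otimes t_{\phi,\phi'}\), identify the diagonal cycle and the Hecke action with Xue's, feed the result into the curve case of \cite[Conjecture 5.1]{XueAGGP}, and let the factor \([E:\varphi_0(F)]\) come from the change of base field of the N\'eron--Tate pairing. But there is a genuine gap, and it sits exactly where you yourself locate ``the crux.'' The one concrete structural claim you make to organize the bookkeeping is wrong: for \(\wt{K'}=K_{Z^\bQ}\times K'\) one has \(\Sh_{\wt{K'}}(\wt{HG})(\bC)\simeq\Sh_{K_{Z^\bQ}}(Z^\bQ)(\bC)\times\Sh_{K'}(\Res_{F_0/\bQ}(H\times G))(\bC)\), so the number of geometric copies of the curve \(\Sh_{K'}(\Res_{F_0/\bQ}(H\times G))\) is \(\sharp\Sh_{K_{Z^\bQ}}(Z^\bQ)(\bC)\), not \(\sharp\Sh_{\wt K}(\wt H)(\bC)\) (the latter already counts the \(\Sh_K(H)\)-points, which sit inside \(\Sh_{K'}(\Res_{F_0/\bQ}(H\times G))\)); moreover these copies are permuted by \(\Gal(\ol E/E)\), since the zero-dimensional Shimura varieties have points in abelian extensions of \(E\), so a decomposition ``as an \(E\)-scheme into copies of \(\Sh_{K'}(\Res_{F_0/\bQ}(H\times G))_E\)'' is not literally available; only the sum of degrees behaves as you want. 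If you run your accounting literally --- component count contributing \(\sharp\Sh_{\wt K}(\wt H)(\bC)\), and then Xue's identity, which already carries the factor \(\sharp\Sh_K(H)(\bC)\) --- the constant comes out wrong by \(\sharp\Sh_K(H)(\bC)\). In the correct bookkeeping the component count \(\sharp\Sh_{K_{Z^\bQ}}(Z^\bQ)(\bC)\) cancels against \((\vol\wt K)^2\,\vol\Sh_{\wt{K'}}(\wt{HG})(\bC)\) via the normalization \(\vol(K_{Z^\bQ})\,\sharp\Sh_{K_{Z^\bQ}}(Z^\bQ)(\bC)=1\), and the factor \(\sharp\Sh_{\wt K}(\wt H)(\bC)\) in the final formula is nothing more than the rewriting \(\vol\wt K\,\sharp\Sh_{\wt K}(\wt H)(\bC)=\vol K\,\sharp\Sh_K(H)(\bC)\) of the factor already present in Xue's formula.

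Beyond this miscount, the decisive quantitative verification --- that the measures \(\vol\), \(\vol'\), the scaling \(z_{\wt{K'},0}=\vol(\wt K)y_{\wt{K'},0}\), the \(Z^\bQ\)-action, and the height base change collapse to exactly the stated constant --- is deferred (``should collapse''), whereas this computation is essentially the entire content of the paper's proof, carried out by writing \(y_{\wt{K'},0}\) as \(y_{K',0}\times\Sh_{K_{Z^\bQ}}(Z^\bQ)\) and using \(\vol'(K_{Z^\bQ})\sum_y t(y)=1\). Also, your opening reduction to \(\wt{K'}=K_{Z^\bQ}\times K'\) and \(\wt t_{\phi,\phi'}=t\otimes t_{\phi,\phi'}\) needs an actual argument: the theorem is asserted for an arbitrary sufficiently small \(\wt{K'}\) and an arbitrary lift \(\wt t_{\phi,\phi'}\), so one must first show that \(\vol\Sh_{\wt{K'}}(\wt{HG})(\bC)\,\vol'\wt{K'}\,T(\wt t_{\phi,\phi'})_{\wt{K'},*}\) is independent of the level and of the choice of lift (the paper does this as in \cite[Proposition A.4]{XueAGGP}); your appeal to compatibility with transition maps points in the right direction but is likewise left unproved.
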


The left hand side is independent of \(\vol'\), the Hecke action depending on it.

\begin{proof}
The factor \(\vol\Sh_{\wt{K'}}(\wt{HG})(\bC)\vol' \wt{K'}T(\wt t_{\phi ,\phi'})_{\wt{K'},*}\) in the left hand side is independent of \(\wt{K'}\) and the choice of inverse image \(\wt t_{\phi,\phi'}\) similarly to the proof of \cite[Proposition A.4]{XueAGGP}.
Therefore we may assume that \(\wt{K'} = K_{Z^\bQ}\times K'\) for levels of \(Z^\bQ\) and \(\Res_{F_0/\bQ}(H\times G)\) and that \(\wt t_{\phi,\phi'} = t\otimes t_{\phi,\phi'}\).
Then
\[
\wt K = \wt{K'}\cap \wt H(\bA_{\bQ,f}) = K_{Z^\bQ}\times (K'\cap H(\bA_{F_0,f})) = K_{Z^\bQ}\times K
\]
and \(T(\wt t_{\phi,\phi'})_{\wt K'} = T(t)_{K_{Z^\bQ}}\times T(t_{\phi,\phi'})_{K'}\).
So, the left hand side of the theorem equals
\begin{align*}
    &(\vol\wt K)^2\vol\Sh_{\wt{K'}}(\wt{HG})(\bC)\vol' \wt{K'}\langle T(\wt t_{\phi ,\phi'})_{\wt{K'},*}(y_{\wt{K'},0}), y_{\wt{K'},0}\rangle_\NT \\
    =&\frac{(\vol K)^2\vol\Sh_{K'}(\Res_{F_0/\bQ}(H\times G))(\bC)}{\sharp\Sh_{K_{Z^\bQ}}(Z^\bQ)(\bC)}\vol' K'\vol'K_{Z^\bQ} \\
    &\sum_{y\in Z^\bQ(\bA_{\bQ,f})/K_{Z^\bQ}}t(y)\langle T(t_{\phi ,\phi'})_{K',*}(y_{K',0})\times \Sh_{K_{Z^\bQ}}(Z^\bQ), y_{K',0}\times \Sh_{K_{Z^\bQ}}(Z^\bQ)\rangle_\NT,
\end{align*}
where the last equality holds because \(T(\mathbf{1}_{K_{Z^\bQ}yK_{Z^\bQ}})_{K_{Z^\bQ},*}\) acts on the cycle \(\Sh_{K_{Z^\bQ}}(Z^\bQ)\) inside itself trivially for each \(y\).
The definition of \(t\) gives
\begin{align*}
    1 &= \int_{Z^\bQ(\bA_f)} t(y)\der\vol'(y) \\
    &=\vol'(K_{Z^\bQ})\sum_{y\in Z^\bQ(\bA_{\bQ,f})/K_{Z^\bQ}}t(y).
\end{align*}
Thus the left hand side of the theorem becomes by \cite[Appendix]{XueAGGP}\footnote{Multiply \(-1\) to the hermitian form of \(W\) as before. The appendix discusses the case of \cite[Conjecture 5.1]{XueAGGP} necessary for us with some assumptions, but the result without additional hypotheses is claimed in the paper.} for \(\pi_f\) tensored with the trivial representations at infinite places
\begin{align*}
    &[E:\varphi_0(F)](\vol K)^2\vol\Sh_{K'}(\Res_{F_0/\bQ}(H\times G))(\bC)\vol' K'\langle T(t_{\phi ,\phi'})_{K',*}(y_{K',0}), y_{K',0}\rangle_\NT \\
    &=[E:\varphi_0(F)]\frac{\vol K\sharp\Sh_K(H)(\bC)}{2^{\beta -1}}\frac{L_f'(1/2,\BC(\pi_1)\times \BC(\pi_2))}{L_f(1,\pi_1, \Ad)L_f(1,\pi_2, \Ad)}\prod_v \alpha_v^\natural(\phi_v, \phi'_v),
\end{align*}
where our \(\beta\) corresponds to the same symbol in \cite[\S 5.1]{XueAGGP} by strong multiplicity one, \(\Delta_2\) does not appear by our different choice of measures in defining \(\alpha_v^\natural\), and the N\'eron--Tate height pairing is now over \(\varphi_0(F)\).
The right hand side of this equation equals that of the theorem.
\end{proof}

\section{The AGGP conjecture} \label{sec:AGGP}

In this section, we relate the last section to Theorem \ref{Intro-AGGP}.

We list consequences of the assumption of the theorem that \(\pi_{2,\infty}\) is cohomological in the following sense.
If
\[
H^1(\Lie(\Res_{F_0/\bQ} G)\otimes_\bQ \bC, (\oU(1)\times \oU(1))\times \oU(2)^{[F_0:\bQ] - 1};\pi_{2,\infty})\neq 0,
\]
then there are only two possiblities for \(\pi_{2,\infty}\) according to \cite[Lemma D.2]{LiuFJCyc}.
They are trivial at all the archimedean places except the one below \(\varphi_0\).
At that place \(\pi_2\) is a holomorphic or antiholomorphic discrete series representation of \(\oU(1,1)\), and the lowest type with respect to \(\oU(1)\times \oU(1)\) sends \((s,t)\in\oU(1)\times \oU(1)\) to \(s/t\) or \(t/s\) respectively by shifting from differential forms in the Hodge decomposition of \(H^1(\Sh(G)(\bC), \bC)\) to modular or automorphic forms.

\begin{lem}\label{archrep}
    Let \(v\) be the place of \(F_0\) below \(\varphi_0\).
    The following is true under the assumption above.
    \begin{enumerate}
        \item\label{packet} The two possiblilities for \(\pi_{2,v}\) are in the same Vogan \(L\)-packet.
        They exhaust \(G(F_{0,v})\)-representations in the packet.
        \item\label{crucialL} We have
        \[
        L(s, \BC(\bC)\times \BC(\pi_{2,v})) = 4(2\pi)^{-2s-1}\Gamma(s+1/2)^2,
        \]
        \(\bC\) meaning the trivial representation and the latter \(\pi\) being the length of the circumference of the circle with diameter \(1\).
        \item\label{trivL} For archimedean places of \(F_0\) except \(v\), we have
        \[
        L(s, \BC(\bC)\times \BC(\bC)) = 4(2\pi)^{-2s-1}\Gamma(s+1/2)^2.
        \]
    \end{enumerate}
\end{lem}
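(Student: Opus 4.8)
The plan is to reduce all three assertions to explicit computations of local Langlands parameters over $\bR$ and of the associated archimedean $L$-factors. Parts~\ref{crucialL} and~\ref{trivL} will turn out to be one and the same Rankin--Selberg computation, performed for the same $\GL_2(\bC)$-parameter, and part~\ref{packet} is the structural input that makes this transparent (throughout, for $v$ and for the other archimedean places $v'$ the completions $F_v$, $F_{v'}$ are isomorphic to $\bC$ since $F$ is CM).

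For part~\ref{packet}, I would argue as follows. By the discussion preceding the lemma, $\pi_{2,v}$ is one of the two discrete series of $G(F_{0,v}) = \oU(1,1)$ with the same regular infinitesimal character $\lambda$ forced by the cohomological hypothesis (namely $\lambda = \rho = (1/2,-1/2)$), one holomorphic and one antiholomorphic. These two share the Langlands parameter $\phi = \Ind_{W_\bC}^{W_\bR}(\chi)$, where $\chi$ is the character of $W_\bC = \bC^\times$ determined by $\lambda$, so they lie in a common Vogan packet. By Harish--Chandra's description of discrete series $L$-packets of a real reductive group, the number of $\oU(1,1)$-members of this packet equals $[W(G_\bC,T_\bC):W(K,T)] = [S_2:1] = 2$, where $T = \oU(1)\times\oU(1)$ is a compact maximal torus and $K = T$ is a maximal compact subgroup; hence the holomorphic and antiholomorphic discrete series already exhaust the $G(F_{0,v})$-representations in the packet. (The remaining members of the Vogan packet are the trivial representations of the forms $\oU(2,0)$ and $\oU(0,2)$; its total size $4$ is the order of the component group of a discrete parameter of $\oU(2)$, but we shall not need this.) For the local Langlands correspondence for $\oU(1,1)$ and these facts one appeals to the theory for quasi-split unitary groups, or notes that for $\oU(1,1)$ everything is classical; see also \cite{LiuFJCyc}.

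For part~\ref{crucialL}, I recall that base change sends a representation of $\oU(n)(F_{0,v})$ with parameter $\phi\colon W_\bR \to {}^L\oU(n)$ to the representation of $\GL_n(F_v) = \GL_n(\bC)$ with parameter $\phi|_{W_\bC}$. The trivial representation of $H(F_{0,v}) = \oU(1)$ has parameter trivial on $W_\bC$, so $\BC(\bC)$ is the trivial character $\mathbf 1$ of $\bC^\times$. Since $\pi_{2,v}$ is the weight-two discrete series singled out above, $\lambda = \rho = (1/2,-1/2)$ and $\chi(z) = z/|z|$, so $\BC(\pi_{2,v})$ is the principal series of $\GL_2(\bC)$ with parameter $z\mapsto\diag\bigl(z/|z|,\,\ol z/|z|\bigr)$. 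Because $\BC(\bC) = \mathbf 1$, the $\GL_1(\bC)\times\GL_2(\bC)$ Rankin--Selberg factor reduces to the standard factor of $\BC(\pi_{2,v})$, and with $\Gamma_\bC(s) = 2(2\pi)^{-s}\Gamma(s)$ and the Tate local factor $L\bigl(s,(z/|z|)^{\pm 1}\bigr) = \Gamma_\bC(s+1/2)$ at $\bC$ one gets
\[
L\bigl(s,\BC(\bC)\times\BC(\pi_{2,v})\bigr) = \Gamma_\bC(s+1/2)^2 = 4(2\pi)^{-2s-1}\Gamma(s+1/2)^2,
\]
which is the asserted identity.

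For part~\ref{trivL}, at an archimedean place $v'\neq v$ of $F_0$ we have $G(F_{0,v'}) = \oU(0,2)\cong\oU(2)$ compact and $\pi_{2,v'}$ trivial; its infinitesimal character is again $\rho = (1/2,-1/2)$, so the restriction to $W_\bC$ of its parameter is once more $z\mapsto\diag\bigl(z/|z|,\,\ol z/|z|\bigr)$, i.e.\ the base change of $\pi_{2,v'}$ is the same $\GL_2(\bC)$-representation as in part~\ref{crucialL}, while $\BC(\bC) = \mathbf 1$ for the trivial representation of $H(F_{0,v'}) = \oU(1)$. Thus part~\ref{trivL} is the identical local computation. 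The one point needing genuine care---the main, if modest, obstacle---is to fix the normalizations consistently with \S\ref{subsec:Auto}: that the cohomological hypothesis forces exactly $\lambda = \rho$ (equivalently the weight-two character $\chi(z) = z/|z|$, with no extra twist), that the unitary-group base change is the untwisted restriction of parameters to $W_\bC$, and that the Rankin--Selberg $L$-factor is the one centred at $s = 1/2$ matching the chosen $\Gamma_\bC$-convention. Once these are pinned down, the three identities follow from the parameter computations above.
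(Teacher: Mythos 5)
Your proposal is correct, and the substance (explicit archimedean parameters, then elementary $\Gamma$-factor arithmetic) matches the paper, but the internal route differs at each step, so a comparison is worthwhile. For (\ref{packet}) the paper does not count discrete series via Harish--Chandra's $[W(G_\bC,T_\bC):W(K,T)]$ argument as you do; it instead quotes the explicit table in \cite[\S 6]{GGPEx}, realizing the two representations as the restriction to $\oU(1,1)$ of $D_{1,0}\boxtimes\mathbf{1}$ on $\GU(1,1)=(\GL_2(\bR)\times\bC^\times)/\bR^\times$ and reading off the packet from that table --- your argument is a fine, slightly more self-contained substitute. For (\ref{crucialL}) the paper stays on the $W_\bR$-side: it writes down the $L$-parameters of $\pi_{1,v}$ and $\pi_{2,v}$ with values in the $L$-groups (including the action of $j$), composes with the $4$-dimensional representation $R$ of ${}^L(\oU(1)\times\oU(1,1))$ from \cite[\S 22]{GGPConj}, and decomposes the resulting $W_\bR$-representation explicitly as $N\oplus N$ with $N=\Ind_{W_\bC}^{W_\bR}(z\mapsto z/|z|)$, so the factor is $L(s,N)^2$; you instead pass to the base-changed parameters and compute the $\GL_1(\bC)\times\GL_2(\bC)$ Rankin--Selberg factor over $W_\bC$ via Tate factors. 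The two computations agree by inductivity of local factors, since $R\circ(\phi_1\times\phi_2)\simeq\Ind_{W_\bC}^{W_\bR}\bigl(\mathbf{1}\otimes(\chi\oplus\chi^{-1})\bigr)\simeq N\oplus N$; your version is a bit more economical, while the paper's version keeps the bookkeeping in the exact GGP normalization of the tensor-product $L$-factor, which is the convention it reuses later (for the $\epsilon$-factor/coherence argument in the proof of the main theorem), so if you adopt your route you should say explicitly that the Rankin--Selberg factor of the base changes at the complex place of $F$ over $v$ coincides with the GGP factor attached to $R$. For (\ref{trivL}) the paper realizes $\GU(2)$ through the Hamilton quaternions and invokes Jacquet--Langlands to carry the trivial representation to $D_{1,0}$, then repeats the same induction computation, whereas you read off the parameter of the trivial representation of compact $\oU(2)$ directly from its infinitesimal character $\rho$; both give the identical factor $4(2\pi)^{-2s-1}\Gamma(s+1/2)^2$.
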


\begin{proof}
(\ref{packet})
In \cite[\S 6]{GGPEx}, the direct sum of the two representations is the restriction of the representation of \(\GU(1,1) = (\GL_2(\bR)\times\bC^\times)/\bR^\times\) obtained by the discrete series representation \(D_{1,0}\) of \(\GL_2(\bR)\) with trivial central character and infinitesimal character \((1/2,-1/2)\) and the trivial \(\bC^\times\)-representation.
The two representations are in a Vogan \(L\)-packet as in the last line in the table on \cite[\S 6]{GGPEx}.

(\ref{crucialL})
Let
\[
    w\colonequals \begin{pmatrix}
    0 & 1 \\
    -1 & 0 \\
    \end{pmatrix}.
\]
We determine the \(L\)-parameter of \(\pi_{2,v}\) as in \cite[\S 6]{GGPEx}.
The real Langlands correspondence carries \(D_{1,0}\) to the induction \(N\) from \(W_\bC\) to \(W_\bR\) of the character \(z\mapsto z/|z|\).
The restriction \(N|_{W_\bC}\) is the sum of the last character and its inverse.
This restriction determines the \(L\)-parameter as \(W_\bC\ni z\mapsto (\diag(z/|z|, \ol z/|z|), 1)\) and \(j\mapsto (w, \ol *)\), where the complex conjugation acts on \(\GL_2(\bC)\) by the conjugation by \(w\) in \({}^L\oU(1,1) = \GL_2(\bC)\rtimes\Gal (\bC/\bR)\).

Similarly, the \(L\)-parameter of the trivial \(\pi_{1,v}\) is trivial on \(W_\bC\) and sends \(j\) to the complex conjugation.

We combine these parameters with the \(4\)-dimensional representaiton \(R\) of \({}^L(\oU(1)\times\oU(1,1))\) at the end of \cite[\S 22]{GGPConj}.
The induced representation has a basis comprising the following \(g_i\colon (\GL_1(\bC)\times \GL_2(\bC))\rtimes\Gal(\bC/\bR)\to\bC^2\) for \(1\leq i\leq 4\).
The function \(g_1\) (resp. \(g_2\), \(g_3\) and \(g_4\)) sends \((1,1,1)\) to \({}^t(1,0)\) (resp. \({}^t(0,1)\), \(0\) and \(0\)) and \((1,1,\ol *)\) to \(0\) (resp. \(0\), \({}^t(1,0)\) and \({}^t(0,1)\)).
The resulting representation \(W_\bR\to\GL(\bC^4)\) has the subrepresentations \(\bC g_1\oplus\bC g_4\) and \(\bC g_2\oplus\bC g_3\), both isomorphic to \(N\).
The result follows.

(\ref{trivL})
We only list main differences from (\ref{crucialL}).
As we expressed \(\GU(1,1)\) through \(\GL_2(\bR)\), we can express \(\GU(2)\) through the archimedean quaternion algebra.
We use the trivial representation of its unit group instead of \(D_{1,0}\) although the trivial representation is carried to \(D_{1,0}\) by the Jacquet--Langlands correspondence anyway.
\end{proof}

As in \cite[\S 6.4]{RSZInt}, the Hecke submodule of \(\Ch^1(\Sh_{\wt{K'}}(\wt{HG}))_0\otimes_\bZ \bC\) generated by \(z_{\wt{K'},0}\) is denoted by \(\cZ_{\wt{K'}, 0}\).
Let \(\cZ_{\wt{K'}, 0}[\pi_f^{\wt{K'}}]\) mean its \(\pi_f^{\wt{K'}}\)-typic part.

\begin{thm} \label{qual}
    Assume that
    \[
    H^1(\Lie(\Res_{F_0/\bQ} G)\otimes_\bQ \bC, (\oU(1)\times \oU(1))\times \oU(2)^{[F_0:\bQ] - 1};\pi_{2,\infty})\neq 0.
    \]
    The following are equivalent.
    \begin{enumerate}
        \item\label{pairing} The pairing \(\langle*,z_{\wt{K'},0}\rangle_{\NT}\) is nonzero on \(\cZ_{\wt{K'}, 0}[\pi_f^{\wt{K'}}]\).
        \item\label{L-function} The order of vanishing of \(L(s,\BC(\pi_1)\times \BC(\pi_2))\) at \(s = 1/2\) is \(1\) and \(\Hom_{\wt H(\bA_{\bQ,f})}(\pi_f, \bC)\) has dimension \(1\) with a generator nonvanishing on \(\pi_f^{\wt K'}\).
    \end{enumerate}
\end{thm}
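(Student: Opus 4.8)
The plan is to read statement~(\ref{pairing}) and the two halves of~(\ref{L-function}) off the identity of Theorem~\ref{eq}, translating the height pairing, the local periods and the central $L$-value in turn.

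First I would reformulate~(\ref{pairing}). The module $\Ch^1(\Sh_{\wt{K'}}(\wt{HG}))_0\otimes_\bZ\bC$ is a semisimple Hecke module (proof of Lemma~\ref{project}), so its $\pi_f^{\wt{K'}}$-typic summand $\cZ_{\wt{K'},0}[\pi_f^{\wt{K'}}]$ is spanned by the vectors $T(\wt t_{\phi,\phi'})_{\wt{K'},*}(z_{\wt{K'},0})$: the operators attached to the $\wt t_{\phi,\phi'}$ with $\phi\in\pi_f^{\wt{K'}}$, $\phi'\in\wt\pi_f^{\wt{K'}}$ run over the rank-one, hence over a spanning, set of the block $\End_\bC(\pi_f^{\wt{K'}})$ of the Hecke algebra, and the N\'eron--Tate pairing is equivariant for the cycle-theoretic Hecke action of \S\ref{subsec:Cohtriv}. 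Thus~(\ref{pairing}) is equivalent to the existence of $\phi,\phi'$ with $\langle T(\wt t_{\phi,\phi'})_{\wt{K'},*}(z_{\wt{K'},0}),z_{\wt{K'},0}\rangle_{\NT}\neq 0$. Substituting Theorem~\ref{eq} and cancelling the strictly positive factors $\vol\Sh_{\wt{K'}}(\wt{HG})(\bC)$, $\vol'\wt{K'}$, $[E:\varphi_0(F)]$, $\vol\wt K$, $\sharp\Sh_{\wt K}(\wt H)(\bC)$, $2^{\beta-1}$, the archimedean period $\prod_{v\mid\infty}\alpha_v^\natural=\prod_{v\mid\infty}\Delta_{2,v}^{-1}\vol_v(H(F_{0,v}))$, and $L_f(1,\pi_1,\Ad)L_f(1,\pi_2,\Ad)$ --- finite and nonzero because the adjoint $L$-functions of the tempered $\pi_i$ are holomorphic and nonvanishing at $s=1$ --- turns~(\ref{pairing}) into: there exist $\phi,\phi'$ with $L_f'(1/2,\BC(\pi_1)\times\BC(\pi_2))\prod_{v\nmid\infty}\alpha_v^\natural(\phi_v,\phi'_v)\neq 0$, i.e. $L_f'(1/2,\BC(\pi_1)\times\BC(\pi_2))\neq 0$ together with $\alpha_v^\natural\not\equiv 0$ on $\pi_v^{K'_v}\otimes\wt\pi_v^{K'_v}$ for every finite $v$ (all but finitely many local factors being $1$).

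Next I would translate the local condition. For tempered $\pi_v$ the integral defining $\alpha_v^\natural$ converges absolutely, and as a bilinear form on $\pi_v\otimes\wt\pi_v$ it is nonzero precisely when $\Hom_{H(F_{0,v})}(\pi_v,\bC)\neq 0$, a space then one-dimensional by local multiplicity one; for a sufficiently small level the associated functional does not vanish on $K'_v$-invariants. Since $\pi$ is trivial on $Z^\bQ(\bA_\bQ)$ and $\wt H=Z^\bQ\times\Res_{F_0/\bQ}H$, multiplying over the finite places shows that ``$\alpha_v^\natural\not\equiv 0$ on $K'_v$-invariants for all $v\nmid\infty$'' is exactly ``$\Hom_{\wt H(\bA_{\bQ,f})}(\pi_f,\bC)$ is one-dimensional with a generator nonvanishing on $\pi_f^{\wt{K'}}$'', the second half of~(\ref{L-function}).

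It remains to compare ``$L_f'(1/2)\neq 0$'' with ``$\mathrm{ord}_{s=1/2}L(s,\BC(\pi_1)\times\BC(\pi_2))=1$''. By Lemma~\ref{archrep}(\ref{crucialL}) and~(\ref{trivL}) the whole archimedean $L$-factor is $\bigl(4(2\pi)^{-2s-1}\Gamma(s+1/2)^2\bigr)^{[F_0:\bQ]}$, holomorphic and nowhere zero near $s=1/2$, so the completed and the partial $L$-functions have equal order of vanishing at $s=1/2$. Hence $L_f'(1/2)\neq 0\iff\mathrm{ord}_{s=1/2}L\leq 1$, and $\mathrm{ord}_{s=1/2}L=1$ yields $L_f'(1/2)\neq 0$ at once; so $(\ref{L-function})\Rightarrow(\ref{pairing})$ is finished, and for $(\ref{pairing})\Rightarrow(\ref{L-function})$ only the upgrade ``$\leq 1$'' to ``$=1$'', i.e. $L(1/2)=0$, i.e. $\varepsilon(1/2,\BC(\pi_1)\times\BC(\pi_2))=-1$, remains. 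I would get this from a place-by-place computation of the root number: by the proof of Lemma~\ref{archrep} the Weil parameter at each archimedean place of $F$ equals $\chi\oplus\chi^{-1}$ with $\chi(z)=z/|z|$, contributing $\varepsilon(1/2,\chi,\psi_v)\varepsilon(1/2,\chi^{-1},\psi_v)=\chi(-1)=-1$ and hence $(-1)^{[F_0:\bQ]}$ over the archimedean places; at the finite places, where the steps above have shown $\pi$ to be distinguished by $\oU(W^\flat_v)$ for every $v\nmid\infty$, the local Gan--Gross--Prasad dictionary identifies $\varepsilon_v$ with the sign of the hermitian space $W^\flat_v$, whence the product formula for the genuine global space $W^\flat$ forces the global root number to equal $-1$. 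Thus $L(1/2)=0$ and the proof is complete.

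The main obstacle is this last step. The archimedean computation is dictated by Lemma~\ref{archrep}, but the finite-place part must be done carefully: one has to fix the normalization of the local Gan--Gross--Prasad signs relative to the additive character $\psi$ of \S\ref{subsec:Vol} and invoke the product formula for $\mathrm{disc}(W^\flat)$, and this is the only point in the argument not formally implied by Theorem~\ref{eq} and Lemma~\ref{archrep}.
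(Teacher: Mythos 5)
Your proposal is correct in substance and its skeleton matches the paper's: both directions are funneled through Theorem~\ref{eq}, the local periods \(\alpha_v^\natural\) are matched with local distinction, and Lemma~\ref{archrep} disposes of the archimedean \(L\)-factors. You diverge at two points. First, for (\ref{pairing})\(\Rightarrow\)(\ref{L-function}) you extract the generator of \(\Hom_{\wt H(\bA_{\bQ,f})}(\pi_f,\bC)\) and its nonvanishing on \(\pi_f^{\wt K'}\) from the nonvanishing of the local periods, which needs the full equivalence ``\(\alpha_v^\natural\neq 0\) iff \(\Hom_{H(F_{0,v})}(\pi_v,\bC)\neq 0\)'' for tempered \(\pi_v\); the paper instead builds the functional geometrically as \(\langle\,\cdot\,,z_{\wt{K_1'},0}\rangle_{\NT}\) on the limit \(\cZ_\pi=\varinjlim_{\wt{K_1'}}\cZ_{\wt{K_1'},0}[\pi_f^{\wt{K_1'}}]\simeq\pi_f\), which yields the nonvanishing on \(\pi_f^{\wt K'}\) directly from (\ref{pairing}) and only uses the easier direction of the local nonvanishing statement (via \cite[\S 1.4.1]{YZZ} at split places and compactness of \(H(F_{0,v})\) otherwise). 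Both routes work. Second, and more significantly, your proof that \(\epsilon(1/2,\BC(\pi_1)\times\BC(\pi_2))=-1\) is a place-by-place root-number computation: archimedean \(\epsilon\)-factors of \(\chi\oplus\chi^{-1}\), the finite-place epsilon dichotomy, and the product formula for \(W^\flat\). The paper instead uses the coherence criterion of \cite[\S 26(1)]{GGPConj}: by \cite[\S 10]{GGPEx} each Vogan packet has a unique distinguished member among relevant pure inner forms; at every place except the one below \(\varphi_0\) that member is \((H(F_{0,v})\times G(F_{0,v}),\pi_v)\), while at the place below \(\varphi_0\) Lemma~\ref{archrep}(\ref{packet}) forces the distinguished member off \(G(F_{0,v})\), so the collection is incoherent and the sign is \(-1\). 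The advantage of the paper's formulation is exactly that it never touches the \(\psi\)-normalizations and \(\eta(\mathrm{disc})\)-type factors that you yourself flag as the unfinished point: your bookkeeping does come out to \(-1\) (the archimedean contribution \((-1)^{[F_0:\bQ]}\) against the archimedean signs \((-1)^{[F_0:\bQ]-1}\) of the one-dimensional space \(W^\flat\)), but to turn it into a proof you would have to fix the local GGP dichotomy in one consistent normalization at all places, including the archimedean ones where you computed \(\epsilon_v\) directly --- precisely the delicate comparison that the incoherence argument packages away.
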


\begin{proof}
We show that (\ref{L-function}) yields (\ref{pairing}).
The second condition of the former implies \(\Hom_{H(F_{0,v})}(\pi_v, \bC)\neq 0\) for each \(v\).
We show that for each finite place \(v\), there exist \(\phi_v\) and \(\phi'_v\) for which \(\alpha_v^\natural\) is nonzero.
If \(F\) is split over \(F_0\) at \(v\), then this problem is discussed in \cite[\S 1.4.1]{YZZ}.
Otherwise, \(H(F_{0,v})\) is compact and commutative so that \(\Hom_{H(F_{0,v})}(\pi_v, \bC) \simeq \Hom_{H(F_{0,v})}(\bC, \pi_v)\).
By sending \(1\) by a nonzero element of the latter, we get \(0\neq\phi_v\in \pi_v^{H(F_{0,v})}\).
Take an open subgroup \(K_v\subseteq G(F_{0,v})\) fixing \(\phi_v\).
Also take \(\phi'_v\in\Hom_\bC(\pi_v, \bC)\) such that \(\phi'_v(\phi_v)\neq 0\).
We may assume that \(\phi'_v\in \wt{\pi_v}\) by replacing it by the map carrying \(\phi_0\in \pi_v\) to
\[
    \int_{K_v} \phi'_v(g\phi_0)\der g,
\]
where \(\der\) is some Haar measure.
Our \(\phi_v\) and \(\phi'_v\) satisfy the required property.

The local periods do not vanish for spherical \(\phi_v\) and \(\phi'_v\) as in \cite[\S 5.1]{XueAGGP}.
Thus by the previous paragraph, there exist \(\phi = \otimes_v\phi_v\in \pi_f\) and \(\phi' = \otimes_v\phi'_v\in \wt\pi_f\) for which the product of \(\alpha_v^\natural\) is nonzero.
The assumption and Theorem \ref{eq} for these \(\phi\) and \(\phi'\) add up to \(\langle T(\wt t_{\phi ,\phi'})_{\wt{K'},*}(z_{\wt{K'},0}), z_{\wt{K'},0}\rangle_\NT\neq 0\).
The left term in the pairing is in \(\cZ_{\wt{K'}, 0}[\pi_f^{\wt{K'}}]\) by definition of \(\wt t_{\phi,\phi'}\) and the fact that \(\cZ_{\wt{K'}, 0}\) is semisimple by the proof of Lemma \ref{project}.

We now turn to the argument from (\ref{pairing}) to (\ref{L-function}).
The dimension of \(\Hom_{\wt H(\bA_{\bQ,f})}(\pi_f, \bC)\) is at most \(1\).
We will show that its generator is given by pairing with \(z_{\wt{K_1'},0}\) for various levels \(\wt{K_1'}\subseteq\wt{K'}\).
First, taking the pairing is well-defined for elements of
\[
    \cZ_\pi\colonequals \varinjlim_{\wt{K_1'}}\cZ_{\wt{K_1'}, 0}[\pi_f^{\wt{K_1'}}]
\]
since if \(\wt{K_2'}\subseteq\wt{K_1'}\) is another level and \(q\) is the corresponding transition morphism of Shimura curves of \(\wt{HG}\), then any \(y\in \cZ_{\wt{K_1'}, 0}[\pi_f^{\wt{K_1'}}]\) satisfies
\[
    \langle q^*y,z_{\wt{K_2'},0}\rangle_{\NT} = \langle y,\vol (\wt{K_2})q_*y_{\wt{K_2'},0}\rangle_{\NT} = \langle y,z_{\wt{K_1'},0}\rangle_{\NT},
\]
where \(\wt{K_2} = \wt{K_2'}\cap\wt H(\bA_{\bQ,f})\).
Next, we obtain the action of \(\wt{HG}(\bA_{\bQ,f})\) on \(\cZ_\pi\) from the Hecke action on \(\cZ_{\wt{K_1'}, 0}[\pi_f^{\wt{K_1'}}]\).
Since \(\cZ_{\wt{K'}, 0}\) is a cyclic Hecke module, \(\cZ_{\wt{K'}, 0}[\pi_f^{\wt{K'}}]\simeq\pi_f^{\wt{K'}}\) by assumption so that \(\cZ_\pi\simeq\pi_f\).
Thus the pairing defines the desired generator of \(\Hom_{\wt H(\bA_{\bQ,f})}(\pi_f, \bC)\) by the assumption since \(z_{\wt{K_1'},0}\) is unchanged by the action of \(\wt H(\bA_{\bQ,f})\).

We have \(\pi_{1,f}\in\sA(\Res_{F_/\bQ} H)\) and \(\pi_{2,f}\in\sA(\Res_{F_/\bQ} G)\).
Then the infinite part \(\pi_{1,\infty}\) is trivial by the real approximation.
Also, \(\pi_{2,\infty}\) is as in the beginning of this section.
As \(T(\wt t_{\phi ,\phi'})_{\wt{K'},*}(z_{\wt{K'},0})\) for various \(\phi\) and \(\phi'\) generate \(\cZ_{\wt{K'}, 0}[\pi_f^{\wt{K'}}]\), Theorem \ref{eq} shows that the finite part of the \(L\)-function has order not more than \(1\) at \(1/2\).
Lemma \ref{archrep} shows that the infinite part has order \(0\) there.
We show that the \(L\)-function vanishes there relying on the local Gan--Gross--Prasad conjecture.

Take the relevant quasi-split inner form \(H^0 \times G^0\) of \(H\times G\) in the sense of \cite[\S 2]{GGPConj}.
For each place \(v\) of \(F_0\), the Vogan packet of \(H^0_v \times G^0_v\) to which \(\pi_{1,v}\boxtimes\pi_{2,v}\) belongs contains a unique representation \(\pi^1_v\) of a relevant pure inner form \(H^1_v\times G^1_v\) of \(H^0_v\times G^0_v\) such that \(\Hom_{H^1_v}(\pi^1_v,\bC)\neq 0\) by \cite[\S 10]{GGPEx}.
Similarly to \cite[\S 26(1)]{GGPConj}, whether \(H^1_v\times G^1_v\) for various \(v\) are coherent is judged by \(\epsilon (1/2, \BC(\pi_1)\times\BC(\pi_2))\).
For \(v\) not below \(\varphi_0\), we have \(H^1_v = H(F_{0,v})\), \(G^1_v = G(F_{0,v})\) and \(\pi^1_v = \pi_{1,v}\boxtimes\pi_{2,v}\) by the already seen latter half of (\ref{L-function}) and the archimedean description of \(\pi\).
For \(v\) below \(\varphi_0\), we find that \(G^1_v\neq G(F_{0,v})\) by the same description and Lemma \ref{archrep} (\ref{packet}).
Thus \(H^1_v\times G^1_v\) for various \(v\) are incoherent, and \(\epsilon (1/2, \BC(\pi_1)\times\BC(\pi_2)) = -1\).
This results in the vanishing of the \(L\)-function.
\end{proof}

\bibliographystyle{amsplain}

\noindent
Yuta Nakayama\\
Graduate School of Mathematical Sciences, The University of Tokyo,
3-8-1 Komaba, Meguro-ku, Tokyo, 153-8914, Japan \\
nkym@ms.u-tokyo.ac.jp
\end{document}